\newtheorem{example}{Example}%
\newtheorem{theorem}{Theorem}%
\journal{arXiv}
\begin{document}

\begin{frontmatter}



\title{KANtrol: A Physics-Informed Kolmogorov-Arnold Network Framework for Solving Multi-Dimensional and Fractional Optimal Control Problems}


\author[af1]{Alireza Afzal Aghaei} 

\affiliation[af1]{organization={Independent Researcher},
            state={Isfahan},
            country={Iran}}

\begin{abstract}
In this paper, we introduce the KANtrol framework, which utilizes Kolmogorov-Arnold Networks (KANs) to solve optimal control problems involving continuous time variables. We explain how Gaussian quadrature can be employed to approximate the integral parts within the problem, particularly for integro-differential state equations. We also demonstrate how automatic differentiation is utilized to compute exact derivatives for integer-order dynamics, while for fractional derivatives of non-integer order, we employ matrix-vector product discretization within the KAN framework. We tackle multi-dimensional problems, including the optimal control of a 2D heat partial differential equation. The results of our simulations, which cover both forward and parameter identification problems, show that the KANtrol framework outperforms classical MLPs in terms of accuracy and efficiency.
\end{abstract}

\begin{keyword}
Kolmogorov-Arnold Network \sep Physics-informed Neural Networks \sep Optimal Control \sep Fractional Calculus


\end{keyword}

\end{frontmatter}

\section{Introduction}

In recent years, Physics-Informed Neural Networks (PINNs) have emerged as a powerful tool for solving differential and integral equations, as well as other physical systems, by incorporating domain knowledge directly into the learning process \cite{raissi2019physics, aghaei2024pinnies}. These models integrate the governing physical laws, such as conservation principles, thermodynamic relations, or Maxwell's equations, into the architecture of deep neural networks \cite{nellikkath2022physics,khan2022physics,arzani2021uncovering}. Unlike traditional data-driven approaches, PINNs leverage this prior knowledge to ensure that the learned solutions adhere to the physical constraints of the system, thereby improving both accuracy and generalization, especially in cases with limited or noisy data.

Building upon the foundational principles of PINNs, several extensions have been proposed to further enhance the model's capabilities through advanced deep learning techniques. One such extension involves the use of Deep Operator Networks (DeepONets), which aim to learn operators that map between infinite-dimensional function spaces, providing a more flexible framework for handling complex input-output relationships in physical systems \cite{lu2019deeponet}. Other methods, such as Fourier Neural Operators, utilize spectral methods to efficiently model multi-scale phenomena, particularly in turbulent fluid flows and weather prediction \cite{li2020fourier}. Additionally, Variational Physics-Informed Neural Networks enhance solution accuracy by combining classical variational principles with deep learning, optimizing energy-based loss functions \cite{kharazmi2019variational}.

Another notable advancement is the development of Kolmogorov-Arnold Networks (KANs), integrated with physics-informed techniques. KANs represent a novel and powerful advancement in solving complex problems, excelling at learning non-linear mappings between high-dimensional spaces. These networks are especially effective in capturing intricate dynamics and decomposing functions into simpler components, making them ideal for modeling both local and global behaviors in physical systems. ordinary differential equations \cite{aghaei2024fkan, aghaei2024rkan}, partial differential equations \cite{liu2024kan, wang2024kolmogorov, rigas2024adaptive, shuai2024physics, ranasinghe2024ginn}, dynamical systems \cite{patra2024physics}, and fractional delay differential equations \cite{aghaei2024fkan}. 

In certain engineering applications, the complexity of systems demands more sophisticated PINN models. This has led to the development of Auxiliary PINNs and the PINNIES framework for solving integral equations \cite{yuan2022pinn, aghaei2024pinnies}. These models extend the traditional PINN framework by incorporating additional neural networks or constraints to handle integral forms of physical laws, making them more suitable for complex systems where standard differential formulations may not suffice. Additionally, Physics-Informed Convolutional Neural Networks have been developed to operate on signal data, making them well-suited for tasks such as time-series forecasting and signal reconstruction \cite{shi2024physics, russell2022physics}. Another significant advancement is the application of PINNs in optimization tasks, particularly in solving control system problems, as demonstrated in recent studies focused on control, design, and resource management optimization \cite{seo2024solving}.

Control systems are essential in various engineering domains, from robotics and aerospace to industrial automation and energy systems. These systems manage the behavior of dynamic processes by adjusting inputs to achieve desired outputs while maintaining stability, efficiency, and performance. The mathematical foundation for designing these systems lies in control theory, which enables the precise manipulation of system dynamics, even in the presence of disturbances and uncertainties \cite{lewis2012optimal}. As the complexity of control systems increases, researchers have focused on developing efficient methods to solve control problems. PINNs have been successfully applied to control systems by incorporating the governing physical laws of dynamic systems directly into the learning process.

The use of neural networks to solve optimal control problems has garnered significant attention in recent decades. Various neural network architectures have been explored depending on the nature of the control problem, be it continuous, discrete, or fractional, along with the corresponding integration techniques. Early works, such as \cite{becerikli2003intelligent}, proposed neural network models based on Multilayer Perceptrons (MLPs) for continuous-time ordinal equations, utilizing the Runge-Kutta-Butcher method for the numerical solution of the system. Later, \cite{liu2012neural} extended this approach to discrete-time systems, also using MLPs. More advanced methods have integrated Hamiltonian principles into the neural network training process. For example, \cite{effati2013optimal} employed MLPs with a Hamiltonian-based approach for solving continuous-time ordinal equations, a method later extended by \cite{ghasemi2017nonlinear} to handle fractional and nonlinear equations, showcasing the flexibility of MLPs. Similarly, \cite{sabouri2017neural} applied Simpson's Rule to solve fractional differential equations, combining traditional numerical integration methods with neural networks. 

The application of neural networks for solving fractional control equations has expanded in recent years. \cite{yavari2019fractional} addressed fractional infinite-horizon equations using MLPs with a Hamiltonian approach, while \cite{mortezaee2020infinite} focused on continuous infinite-horizon ordinal equations. Delay differential equations have also been investigated by \cite{kheyrinataj2020fractional}, who employed MLPs combined with Hamiltonian techniques. This approach was later revisited in \cite{kheyrinataj2023solving}, where Simpson’s Rule was used to address similar problems. In more recent studies, deep learning models have been employed for continuous-time ordinal equations. For instance, \cite{benning2021deep} demonstrated the effectiveness of Hamiltonian methods in deep learning architectures for solving control tasks, and this trend has continued with works such as \cite{bottcher2022ai}, \cite{barry2022physics}, and \cite{yin2024aonn}. These models address continuous-time ordinal and partial differential equations, continuing the use of Hamiltonian methods to solve control problems. Additionally, \cite{na2024physics} employed Euler-Lagrange principles in their approach. 

Recurrent Neural Networks (RNNs) have also been applied to optimal control problems, particularly for discrete-time ordinal equations, as shown by \cite{chen2018optimal}, emphasizing their ability to handle time-dependent data. The combination of neural networks with traditional numerical methods, such as in the work by \cite{sanchez2018real}, highlights the utility of numerical integration techniques for solving continuous-time ordinal control problems. More recently, \cite{yin2023optimal} and \cite{aghaei2024pinnies} employed Gaussian Quadrature for continuous-time ordinal equations, while \cite{mowlavi2023optimal} applied the Midpoint Rule for partial differential equations. 

Despite the advancements in using neural networks for control tasks, no prior work has specifically addressed optimal control problems using KANs. In this paper, we propose the development of Kolmogorov-Arnold Networks for the accurate prediction of a class of optimal control problems, including those with ODE/PDE constraints, fractional derivatives, and integro-differential equations (IDEs). To achieve this, we employ Gaussian quadrature to approximate the integral terms that arise in the functional or constraints of the problem and introduce the \textit{KANtrol} framework. Our contributions are as follows:
\begin{itemize}
    \item We develop the \textit{KANtrol} framework, which leverages KANs for efficiently solving optimal control problems.
    \item A method is explained for approximating integrals in physics-informed tasks when solving integral operator problems.
    \item We address fractional and multi-dimensional optimal control problems.
    \item Parameter identification of optimal control problems is also considered in this work.
\end{itemize}

The rest of the paper is organized as follows: Section 2 provides an overview of Kolmogorov-Arnold Networks. In Section 3, we present the proposed methodology for solving optimal control problems using KANs. Section 4 includes several numerical examples to demonstrate the effectiveness of the proposed approach. Finally, in Section 5, we conclude with a discussion on the limitations of this work and suggest potential future research directions.

\section{Kolmogorov-Arnold Network}
Kolmogorov-Arnold Networks (KANs) are a class of neural networks grounded in the Kolmogorov-Arnold representation theorem, which proves that any multivariate continuous function can be decomposed into a composition of univariate functions and addition operations \cite{liu2024kan,aghaei2024fkan}. Mathematically, they can be represented as:
\[
\psi\left(x_{1}, \ldots, x_{d}\right) = \sum_{q=1}^{2d+1} \Phi_{q}\left(\sum_{p=1}^{d} \phi_{q, p}\left(\zeta_{p}\right)\right),
\]
where the functions \(\phi_{q, p}\) are univariate and map each input \(\zeta_p\), such that \(\phi_{q, p}: [0, 1] \to \mathbb{R}\), while \(\Phi_q: \mathbb{R} \to \mathbb{R}\) handles further transformations. In KANs, a KAN layer is defined by the collection of univariate functions, mathematically denoted as:
\[
\boldsymbol{\Phi} = \{\phi_{q, p}\}, \quad p=1, 2, \ldots, n_{\text{in}}, \quad q=1, 2, \ldots, n_{\text{out}}.
\]
The architecture of KANs can be visualized as a deep, multi-layer structure where each layer consists of neurons performing linear transformations followed by non-linear activations. For an input vector \(\boldsymbol{\zeta}\), the output of the network is computed via a series of compositions, written as:
\[
\operatorname{KAN}(\boldsymbol{\zeta}) = \left(\boldsymbol{\Phi}_{L-1} \circ \boldsymbol{\Phi}_{L-2} \circ \cdots \circ \boldsymbol{\Phi}_{0}\right)\boldsymbol{\zeta},
\]
where \(L\) denotes the total number of layers in the network.

One of the distinguishing features of KANs is their use of specialized activation functions. These include B-splines, which are particularly useful for flexible, non-linear interpolation. The activation function in a KAN is represented as:
\[
\phi(\zeta) = w_{b} b(\zeta) + w_{s} \text{spline}(\zeta),
\]
where \(b(\zeta) = \frac{\zeta}{1 + \exp({-\zeta})}\) is a sigmoid-like function such as SiLU, \(\text{spline}(\zeta) = \sum_{i=0}^k c_{i} B_{i}(\zeta)\) is a combination of B-spline basis functions \(B_i(\zeta)\), polynomials of a specified degree, with the parameters \(c_i\), \(w_b\), and \(w_s\) being trainable.

\section{Methodology}

Optimal control problems involve determining a control function that optimizes a performance criterion while adhering to dynamic system constraints. These systems are often modeled by ordinary or partial differential equations (ODEs/PDEs) that describe the evolution of the system's state over time. Mathematically, an optimal control problem can be expressed as the minimization (or maximization) of a cost functional $\mathfrak{J}(\psi)$, which depends on both the control $\psi(\tau)$ and the states $\boldsymbol\xi(\tau)$. The objective is to find an optimal control function $\psi^*(\tau)$ that minimizes the functional:
\begin{equation}
    \mathfrak{J}(\psi) = \int_\Xi \mathcal{L}(\boldsymbol\xi(\tau), \psi(\tau), \tau) \, d\tau + \mathcal{G}(\boldsymbol\xi(\tau_f)),
    \label{eq:optcont}
\end{equation}
where $\Xi = [{\tau_0},{\tau_f}]^d$ is the $d$-dimensional problem domain, $ \mathcal{L}(\boldsymbol\xi(\tau), \psi(\tau), \tau)$ represents the running cost, and $\mathcal{G}(\boldsymbol\xi(\tau_f))$ is a terminal cost function. The state $\boldsymbol\xi(\tau)\in\mathbb{R}^{\mathfrak{n}}$ evolves according to a set of differential equations, typically expressed as:
\[
\mathscr{D}\big[{\boldsymbol\xi}(\tau)\big] = \mathcal{F}(\boldsymbol\xi(\tau), \psi(\tau), \tau),
\]
where $\mathscr{D}[\cdot]$ is a differential operator, subject to boundary or initial conditions at the times $\tau_0$ and $\tau_f$.

In various applications, additional constraints may be imposed on both the control and the state variables, making the problem more complex. These can include control bounds, state limitations, infinite time horizons, or integral constraints on the system. In more advanced scenarios, fractional derivatives or integro-differential equations may describe the system's dynamics, necessitating specialized numerical approximation techniques. 

To solve optimal control problems, various mathematical approaches are employed, such as Pontryagin's Maximum Principle and the Hamilton-Jacobi-Bellman equation \cite{lewis2012optimal}. In this work, we leverage Kolmogorov-Arnold networks to approximate both the control function and the state evolution by embedding the problem's physics directly into the neural network architecture. This approach enables us to efficiently handle the wide range of constraints and dynamics found in optimal control problems, including fractional and integral terms. In this method, each unknown function in the problem is represented using a KAN architecture as follows:
\begin{align*}
    \psi(\tau) &= \mathrm{KAN}_0(\tau), \\
    \boldsymbol\xi_i(\tau) &= \mathrm{KAN}_i(\tau), \quad i = 1, 2, \dots, \mathfrak{n}.
\end{align*}
Using these approximations, we then calculate the cost functional. For this purpose, numerical integration techniques are employed to obtain a precise approximation of the integral. Among popular methods such as Simpson's rule, Midpoint Rule, Newton-Cotes, Monte Carlo, and Gaussian quadrature, the latter has proven to be the most accurate approximation method \cite{aghaei2024pinnies}. Applying Gaussian Quadrature, we approximate the cost functional as follows:
\begin{equation}
    \mathfrak{J} \approx \frac{1}{2} [\tau_f - \tau_0] \sum_{j=1}^\mathfrak{Q} \omega_j \mathcal{L}(\boldsymbol\xi(\hat\tau_j), \psi(\hat\tau_j), \hat\tau_j),
    \label{eq:costapprox}
\end{equation}
where $\hat\tau_j = \frac{\tau_f - \tau_0}{2} \hat\tau + \frac{\tau_0 + \tau_f}{2}$, and $\tau_j$ are the roots of the Legendre polynomials \cite{aghaei2024pinnies,firoozsalari2023deepfdenet}. The parameter $\mathfrak{Q}$ represents the order of quadrature, which theoretically improves accuracy with larger values. However, due to Runge's phenomenon, achieving higher-order accuracy may not always be possible.

The constraints of this optimization problem can be divided into three main categories: integer-order derivatives, fractional-order derivatives, and integro-differential terms. In the simplest case, the constraints involve only derivatives of the state variable with respect to time. For such cases, automatic differentiation techniques in deep learning frameworks can be employed to compute these derivatives efficiently \cite{raissi2019physics, aghaei2024fkan, aghaei2024pinnies}. However, for more complex cases involving fractional derivatives or integro-differential equations, it becomes necessary to either extend the KAN framework with auxiliary variables \cite{yuan2022pinn}, or apply a general approximation method, as described below.

\paragraph{Fractional Order Derivatives}
Fractional calculus is a generalization of traditional calculus, where the concept of derivatives and integrals is extended to non-integer (fractional) orders. This extension allows for more accurate modeling of physical phenomena that exhibit memory effects, long-range interactions, or anomalous diffusion, which are often inadequately captured by classical integer-order calculus. In fractional calculus, the derivative of a function can be taken to a non-integer order, denoted as $\mathscr{D}^\alpha$, where $\alpha$ is a real or complex number.  Fractional derivatives are particularly useful in fields such as viscoelasticity, control theory, and fluid dynamics, where the system's behavior depends not only on its current state but also on its historical states \cite{aghaei2024pinnies,yavari2019fractional,ma2023bi,pang2019fpinns}. These derivatives enable the modeling of such systems by accounting for this inherent memory. For example, in control systems, fractional derivatives provide more flexibility in characterizing system dynamics by introducing additional degrees of freedom, resulting in more accurate and realistic models \cite{yavari2019fractional,kheyrinataj2020fractional}.

An interesting feature of fractional derivatives is the non-uniqueness of their definitions. In fact, several types of fractional derivatives are commonly employed, with the most prominent being the Riemann-Liouville, Caputo, and Grünwald-Letnikov derivatives \cite{baleanu2012fractional,firoozsalari2023deepfdenet}. Each of these has distinct advantages, making them suitable for different applications, depending on the problem's nature. Among these, the Caputo derivative is particularly favored, as it extends classical integer-order derivatives. Importantly, as the fractional order $\alpha$ approaches an integer, the Caputo derivative converges to the corresponding integer-order derivative. This property makes it especially useful in scenarios where a smooth transition between fractional and integer-order calculus is needed.

In this paper, we consider the Caputo fractional derivative, which is defined as:
\begin{equation*}
    \mathscr{D}^\alpha \xi(\tau) = \frac{1}{\Gamma(1-\alpha)} \int_{0}^{\tau} \frac{\frac{d}{d\tau}\xi(\eta)}{(\tau-\eta)^\alpha} d\eta,
\end{equation*}
where $\alpha \in (0,1)$ is the fractional order, and $\Gamma(\cdot)$ is the Gamma function \cite{aghaei2024fkan,firoozsalari2023deepfdenet}. As can be seen, computing this derivative requires evaluating a singular integral, which may not be feasible or accurate using standard numerical integration techniques. To address this challenge, recent works have proposed the use of finite difference discretization to approximate the derivative \cite{taheri2024accelerating, aghaei2024pinnies, pang2019fpinns}. In this approach, the fractional derivative is approximated by evaluating the neural network at a set of discrete points, which can either be equidistant \cite{taheri2024accelerating} or arbitrarily spaced \cite{aghaei2024pinnies}. In the following, we recall the approach proposed in \cite{aghaei2024pinnies} for approximating the Caputo fractional derivative operator. 
\begin{theorem}
\label{thm:operationalmatrix}
The Caputo fractional derivative can be approximated using the matrix-vector product:
\begin{equation*}
    \mathscr{D}^\alpha\big[\tilde\xi\, \big] \approx \mathcal{D}^\alpha \mathbf{\tilde\xi},
\end{equation*}
where $\tilde\xi_j = \xi(\tau_j)$, $\tau_j$ for $j=1,\ldots, M$ are some points in the problem domain, and the lower triangular matrix $\mathcal{D}^\alpha$ is defined as:
\begin{equation*}
    \mathcal{D}^\alpha = \begin{bmatrix}
0 \\
\mu_0^{(1)} & \mu_1^{(1)}\\
\mu_0^{(2)} & \mu_1^{(2)} & \mu_2^{(2)}\\
&\vdots\\
\mu_0^{(M-2)} & \mu_1^{(M-2)} & \mu_2^{(M-2)} & \mu_3^{(M-2)} & \cdots & \mu_{M-2}^{(M-2)}\\
\mu_0^{(M-1)} & \mu_1^{(M-1)} & \mu_2^{(M-1)} & \mu_3^{(M-1)} & \cdots & \mu_{M-2}^{(M-1)} & \mu_{M-1}^{(M-1)}
\end{bmatrix},
\end{equation*}
where $\mu_k^{(i)} = {\left[\lambda_k^{(i)} - \lambda_{k-1}^{(i)}\right]}/{\Gamma(2-\alpha)}$ and:
\begin{equation*}
\lambda_k^{(i)} = \frac{(\tau_i-\tau_k)^{1-\alpha}-(\tau_i-\tau_{k+1})^{1-\alpha}}{\tau_{k+1}-\tau_k},
\end{equation*}
for any $0\le k < M$.
\end{theorem}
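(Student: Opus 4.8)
The plan is to derive the operational matrix directly from the definition of the Caputo derivative by replacing the unknown function $\xi$ with its piecewise-linear interpolant on the node set $\{\tau_0,\tau_1,\ldots,\tau_{M-1}\}$ and then evaluating the resulting integral in closed form. First I would fix an evaluation index $i$ and write
\[
\mathscr{D}^\alpha\xi(\tau_i) = \frac{1}{\Gamma(1-\alpha)}\int_0^{\tau_i}\frac{\xi'(\eta)}{(\tau_i-\eta)^\alpha}\,d\eta
 = \frac{1}{\Gamma(1-\alpha)}\sum_{k=0}^{i-1}\int_{\tau_k}^{\tau_{k+1}}\frac{\xi'(\eta)}{(\tau_i-\eta)^\alpha}\,d\eta,
\]
splitting the integration range into the subintervals determined by the nodes. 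On each subinterval $[\tau_k,\tau_{k+1}]$ the piecewise-linear interpolant has constant derivative, namely the slope $\big(\xi(\tau_{k+1})-\xi(\tau_k)\big)/(\tau_{k+1}-\tau_k)$, so it can be pulled out of the integral.

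The next step is the one genuine computation: evaluating $\int_{\tau_k}^{\tau_{k+1}}(\tau_i-\eta)^{-\alpha}\,d\eta$. Since $\alpha\in(0,1)$, the antiderivative of $(\tau_i-\eta)^{-\alpha}$ is $-(\tau_i-\eta)^{1-\alpha}/(1-\alpha)$, which is finite even when $k+1=i$ (the only subinterval touching the singularity), giving
\[
\int_{\tau_k}^{\tau_{k+1}}(\tau_i-\eta)^{-\alpha}\,d\eta
= \frac{(\tau_i-\tau_k)^{1-\alpha}-(\tau_i-\tau_{k+1})^{1-\alpha}}{1-\alpha}.
\]
Multiplying by the slope and using $\Gamma(1-\alpha)(1-\alpha)=\Gamma(2-\alpha)$, the contribution of subinterval $k$ becomes $\big(\lambda_k^{(i)}/\Gamma(2-\alpha)\big)\,\big(\xi(\tau_{k+1})-\xi(\tau_k)\big)$ with $\lambda_k^{(i)}$ exactly as stated in the theorem. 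So the approximation to $\mathscr{D}^\alpha\xi(\tau_i)$ is $\frac{1}{\Gamma(2-\alpha)}\sum_{k=0}^{i-1}\lambda_k^{(i)}\big(\xi(\tau_{k+1})-\xi(\tau_k)\big)$.

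The final step is an Abel summation (summation by parts) to rewrite $\sum_{k=0}^{i-1}\lambda_k^{(i)}\big(\tilde\xi_{k+1}-\tilde\xi_k\big)$ in the form $\sum_{k} \mu_k^{(i)}\tilde\xi_k$, collecting the coefficient of each $\tilde\xi_k$; reindexing produces differences $\lambda_k^{(i)}-\lambda_{k-1}^{(i)}$ (with the convention that out-of-range $\lambda$ terms vanish), which is precisely the definition $\mu_k^{(i)}=[\lambda_k^{(i)}-\lambda_{k-1}^{(i)}]/\Gamma(2-\alpha)$. Reading off the coefficients for $i=1,\ldots,M-1$ and noting that the $i=0$ row is zero (an empty sum, consistent with $\mathscr{D}^\alpha$ of a function starting at $\tau_0$) assembles the lower-triangular matrix $\mathcal{D}^\alpha$, and stacking the rows gives the matrix–vector identity $\mathscr{D}^\alpha[\tilde\xi\,]\approx \mathcal{D}^\alpha\tilde\xi$. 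The only subtle point to flag is the bookkeeping at the endpoints of the summation-by-parts step — making sure the boundary terms are handled consistently so that the stated formula for $\mu_k^{(i)}$ holds uniformly for all $k$ — together with the observation that the singular integral over the last subinterval is still convergent precisely because $1-\alpha>0$; neither is hard, but both deserve an explicit word in the writeup. I do not expect any serious obstacle: the result is essentially the classical $L1$ finite-difference scheme for the Caputo derivative adapted to arbitrarily spaced nodes.
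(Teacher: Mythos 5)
The paper itself contains no proof of this theorem --- its ``proof'' is only a citation to \cite{aghaei2024pinnies, taheri2024accelerating} --- so a derivation of the kind you give is exactly what is called for, and your route is the standard $L1$ construction on a nonuniform grid (replace $\xi$ by its piecewise-linear interpolant, integrate $(\tau_i-\eta)^{-\alpha}$ exactly on each subinterval, use $\Gamma(1-\alpha)(1-\alpha)=\Gamma(2-\alpha)$), which is precisely what those references do. Everything up to and including the intermediate formula $\mathscr{D}^\alpha\xi(\tau_i)\approx\frac{1}{\Gamma(2-\alpha)}\sum_{k=0}^{i-1}\lambda_k^{(i)}\big(\tilde\xi_{k+1}-\tilde\xi_k\big)$ is correct, including your remark that the subinterval touching the singularity causes no trouble because $1-\alpha>0$.

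The last step, however, does not come out ``precisely'' as you assert, and this is exactly the endpoint bookkeeping you flagged but then waved through. Summation by parts of $\sum_{k=0}^{i-1}\lambda_k^{(i)}(\tilde\xi_{k+1}-\tilde\xi_k)$ gives the coefficient $\lambda_{k-1}^{(i)}-\lambda_k^{(i)}$ in front of $\tilde\xi_k$ for $1\le k\le i-1$, together with $-\lambda_0^{(i)}$ for $\tilde\xi_0$ and $+\lambda_{i-1}^{(i)}$ for $\tilde\xi_i$; under the natural convention that out-of-range $\lambda$'s vanish, this is the \emph{reverse} of the stated $\mu_k^{(i)}=\big[\lambda_k^{(i)}-\lambda_{k-1}^{(i)}\big]/\Gamma(2-\alpha)$. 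A one-line check with $i=1$ exposes the discrepancy: your (correct) formula gives $\mathscr{D}^\alpha\xi(\tau_1)\approx\lambda_0^{(1)}(\tilde\xi_1-\tilde\xi_0)/\Gamma(2-\alpha)$ with $\lambda_0^{(1)}=(\tau_1-\tau_0)^{-\alpha}>0$, whereas the first nontrivial matrix row, read literally with vanishing out-of-range terms, produces $\lambda_0^{(1)}(\tilde\xi_0-\tilde\xi_1)/\Gamma(2-\alpha)$. So your writeup must either state the coefficients as $\mu_k^{(i)}=\big[\lambda_{k-1}^{(i)}-\lambda_k^{(i)}\big]/\Gamma(2-\alpha)$ with the explicit conventions $\lambda_{-1}^{(i)}:=0$ and $\lambda_i^{(i)}:=0$ (the latter is needed in any case, since the displayed formula for $\lambda_i^{(i)}$ would involve a fractional power of the negative quantity $\tau_i-\tau_{i+1}$), or explicitly note that the theorem's displayed difference is reversed relative to what the derivation yields. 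As written, the claim that the summation-by-parts lands exactly on the stated $\mu_k^{(i)}$ is false, so the final identification step is a genuine (if purely sign-level) gap; the rest of the argument is sound and is exact whenever $\xi$ is piecewise linear on the chosen nodes.
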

\begin{proof}
    See \cite{aghaei2024pinnies, taheri2024accelerating}.
\end{proof}
\paragraph{Integro-Differential Equations}
Integro-differential equations are a class of equations that combine aspects of both integral and differential equations \cite{aghaei2024pinnies}. These equations involve an unknown function that appears under both integral and differential operators, making them useful for modeling complex systems where the current state depends on both the history (through the integral) and the instantaneous rate of change (through the derivative). IDEs naturally arise in various fields such as biology, physics, engineering, and economics, where processes are governed by both local dynamics and cumulative effects. Examples include population dynamics with delayed feedback, heat conduction with memory effects, and systems with hereditary properties \cite{lakshmikantham1995theory}.

Mathematically, an integro-differential equation may take the form:
\[
\mathscr{D}\big[\xi(\tau)\big] = \mathcal{F}\left(\tau, \xi(\tau), \int_{\tau_0}^{\tau} \mathcal{K}(\tau, \iota) \xi(\iota) d\iota \right),
\]
where the unknown function $\xi(\tau)$ is both differentiated and integrated, with the integral term accounting for cumulative effects over time. The inclusion of the integral term adds complexity to solving these equations, as one must compute or approximate this term. Numerical methods, such as discretization techniques and specialized quadrature methods, are often employed to approximate this operator \cite{aghaei2024pinnies}. In this work, we employ Gauss-Legendre quadrature to compute the integral terms, as previously described for cost approximation. This can be expressed mathematically as:
\begin{equation}
\int_{\tau_0}^{\tau} \mathcal{K}(\tau, \iota) \xi(\iota) d\iota \approx \frac{1}{2}[\tau - \tau_0] * \left[\mathbf{K} * \tilde\xi\right] \boldsymbol\omega,
\label{eq:int}
\end{equation}
where $*$ denotes element-wise multiplication, $\mathbf{K}$ is a matrix that evaluates the kernel function $\mathcal{K}(\tau, \iota)$ over the problem domain $\tau$ and the integration domain $\iota$, $\boldsymbol{\omega}$ is a column vector containing Gauss-Legendre weights, and $\tilde{\xi}$ is a matrix of neural network predictions on a grid of the problem and integration domains \cite{aghaei2024pinnies}.

Following the approximation of the integral term using this approach (Equation \eqref{eq:int}), computing fractional derivatives based on theorem \ref{thm:operationalmatrix}, and calculating integer-order derivatives via automatic differentiation and computing the functional of the problem using Equation \eqref{eq:costapprox}, the physics-informed loss function of the network can be constructed as follows:
\begin{equation}
   \begin{aligned}
        \mathrm{Loss}(\boldsymbol{\tau}) =  \ \upsilon_\mathfrak{J} \mathfrak{J} 
    &+ \upsilon_{\mathscr{R}} \sum_{\xi \in \boldsymbol{\xi}}\sum_{\tau' \in \boldsymbol{\tau}} \big[\mathscr{R}_\xi({\tau'})\big]^2 \\
    & + \upsilon_{\mathscr{B}} \sum_{\xi \in \boldsymbol{\xi}}\sum_{(\tau', \xi_{\tau'}) \in {\mathbf{\mathscr{B}}}} \big[\xi(\tau') - \xi_{\tau'}\big]^2 \\
    & + \upsilon_{\mathscr{O}} \sum_{\xi \in \boldsymbol{\xi}}\sum_{(\tau',\mathcal{o}) \in \mathscr{O}} \big[\xi(\tau')-\mathcal{o}\big]^2,
   \end{aligned}
    \tag{KANtrol}
\end{equation}
where $\mathscr{R}_\xi(\tau')$ represents the residual of the approximated state equation $\xi(\cdot)$ using KAN architecture on input data $\tau'$, $\mathscr{B}$ denotes the set of initial or boundary conditions, and $\mathscr{O}$ represents set of observable data. The weights $\upsilon_{\mathfrak{J}}, \upsilon_{\mathscr{R}}, \upsilon_{\mathscr{B}}, \upsilon_{\mathscr{O}}$ are regularization hyperparameters that balance the contributions of each term to the overall loss. These hyperparameters are tuned to ensure the network optimally satisfies both the physical constraints and problem requirements.

In Figure \ref{fig:schema}, we illustrate the overall structure of the proposed KANtrol framework. In the subsequent section, we apply this architecture to simulate a series of numerical examples for optimal control problems, demonstrating the effectiveness and validity of our method.

\begin{figure}
    \centering
    \includegraphics[width=0.97\linewidth]{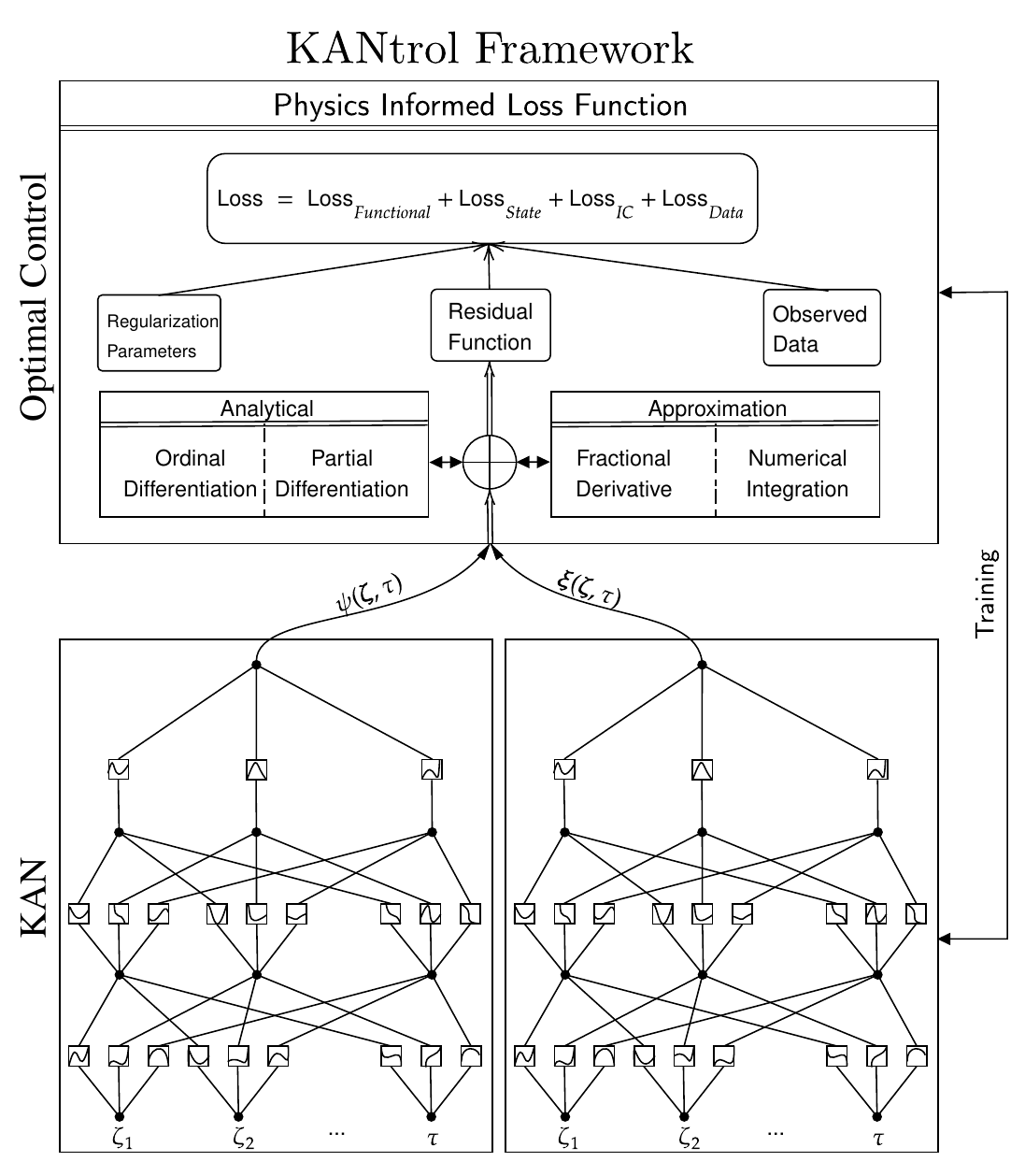}
    \caption{Overall Structure of KANtrol framework for solving optimal control problems using Kolmogorov-Arnold Networks.}
    \label{fig:schema}
\end{figure}

\section{Numerical Experiments}
In this section, we simulate a variety of optimal control problems, encompassing both forward and inverse problems. For the forward problems, the control problem is solved with predefined initial or boundary conditions \cite{kafash2012application}. In contrast, for the inverse problems, we omit the initial conditions and assume the presence of unknown parameters that must be identified, relying instead on noisy observational data \cite{mombaur2010human,jean2018inverse,levine2012continuous,dvijotham2010inverse,menner2019constrained}.

We implemented the method using the official KAN implementation in Python, built with PyTorch. For comparison, we employed the official implementations of rational KAN (rKAN) \cite{aghaei2024rkan}, fractional KAN (fKAN) \cite{aghaei2024fkan}, and classical MLPs within the PINNIES framework \cite{aghaei2024pinnies}. In all examples, we used the same network architecture, $[1,10,1]$, and the L-BFGS optimizer. For KAN, we set the number of grid intervals to $5$. For fKAN, we reported the best results across different polynomial degrees ranging from $2$ to $6$, while for rKAN, we reported the best Pade hyperparameters, with $[p/q]$ for $p=2,3,\ldots,6$ and $q=p+1,\ldots,6$. All experiments were conducted on a personal computer equipped with an Intel Core-i3 10100 processor and 16GB of RAM, running the EndeavourOS Linux distribution.
\begin{example}
\label{ex:frac-forward}
For the first example, we consider the problem \eqref{eq:optcont} with 
$$\mathcal{L}(\boldsymbol\xi(\tau), \psi(\tau), \tau)  = \left( \xi_1(\tau) - 1 - \tau^\frac{3}{2} \right)^2 + \left( \xi_2(\tau) - \tau^\frac{5}{2} \right)^2 + \left( \psi(\tau) - \frac{3\sqrt{\pi}}{4} \tau + \tau^\frac{5}{2} \right)^2,$$
$ \mathscr{D}^{0.5} [\xi_1(\tau)] = \xi_2(\tau) + u(\tau)$, $\mathscr{D}^{0.5} [\xi_2(\tau)] = \xi_1(\tau) + \frac{15\sqrt{\pi}}{16} \tau^2 - \tau^\frac{3}{2} - 1$, and the initial conditions $\xi_1(0) = 1$, $\xi_2(0) = 0$, with $\Xi=[\tau_0, \tau_f] = [0,1]$. The exact solution to this problem is given by $\xi_1(\tau) = 1 + \tau^{\frac{3}{2}}$, $\xi_2(\tau) = \tau^{\frac{5}{2}}$, and $\psi(\tau) = \frac{3\sqrt{\pi}}{4} \tau - \tau^{\frac{5}{2}}$. The analytical solution to this problem yields a cost value $\mathfrak{J} = 0$ in the optimal case.

By setting $M = 2000$ in Theorem \ref{thm:operationalmatrix} and $\upsilon_{\mathfrak{J}} = 10^{-1}$, we trained three different neural network architectures to approximate $\psi(\tau), \xi_1(\tau)$, and $\xi_2(\tau)$. The results of this simulation are shown in Table \ref{tbl:frac-forward}, where the proposed method demonstrates superior performance in terms of mean absolute error between the predicted and analytical solutions. However, based on our experiments and previous findings \cite{liu2024kan}, this network is slightly slower compared to the others.

\begin{table}[ht]
\centering
\begin{tabular}{@{}ccccc@{}}
\toprule
\backslashbox{Criteria}{Model} & \begin{tabular}[c]{@{}c@{}}KAN\\ $k = 3$\end{tabular} & \begin{tabular}[c]{@{}c@{}}Fractional KAN\\ $k=2$\end{tabular} & \begin{tabular}[c]{@{}c@{}}Rational KAN\\ Pade [3/6]\end{tabular} & \begin{tabular}[c]{@{}c@{}}PINNIES\\ MLP\end{tabular} \\ \midrule
$\mathfrak{J}$   & {$\mathbf{7.71 \times 10^{-8}}$}  & $5.95 \times 10^{-5}$ & $5.02 \times 10^{-6}$ & $3.03 \times 10^{-6}$ \\
${\psi}(\tau)$   & {$\mathbf{2.28 \times 10^{-4}}$}  & $6.02 \times 10^{-3}$ & $1.64 \times 10^{-3}$ & $1.18 \times 10^{-3}$ \\
${\xi_1}(\tau)$  & {$\mathbf{4.04 \times 10^{-5}}$}  & $1.91 \times 10^{-3}$ & $4.35 \times 10^{-4}$ & $2.97 \times 10^{-4}$ \\
${\xi_2}(\tau)$  & {$\mathbf{1.44 \times 10^{-5}}$}  & $4.75 \times 10^{-4}$ & $1.49 \times 10^{-4}$ & $2.04 \times 10^{-4}$ \\ \bottomrule
\end{tabular}
\caption{Comparison of mean absolute error (MAE) of KANtrol with similar neural network architectures for solving fractional optimal control problem \ref{ex:frac-forward}.}
\label{tbl:frac-forward}
\end{table}

\end{example}

\begin{example}
    \label{ex:frac-inverse}
In this example, we revisit Example \ref{ex:frac-forward}, but in the context of an inverse problem. Here, we omit the initial conditions and assume that noisy observational data for the state variables are provided. To introduce additional complexity, we also assume that the term $\frac{15\sqrt{\pi}}{16}$ in the first constraint is unknown, allowing the network to identify this parameter, denoted as $\kappa$. We assume that 20 random data points are available in the problem domain, with noisy targets generated by adding noise, sampled from a standard Gaussian distribution with a noise factor of 0.05, to the exact solution. The simulation results are shown in Figure \ref{fig:inverse}. In this case, the simulation achieved $\mathfrak{J} = 2.06\times10^{-6}$, and the MAE for the control and state variables was $6.41\times10^{-4}, 5.28\times10^{-4}, 9.11\times10^{-4}$, respectively. Figure \ref{fig:kappa} also illustrates the network's convergence in identifying the unknown value $\kappa$.

    \begin{figure}[ht]
        \centering
        \includegraphics[height=0.63\textheight]{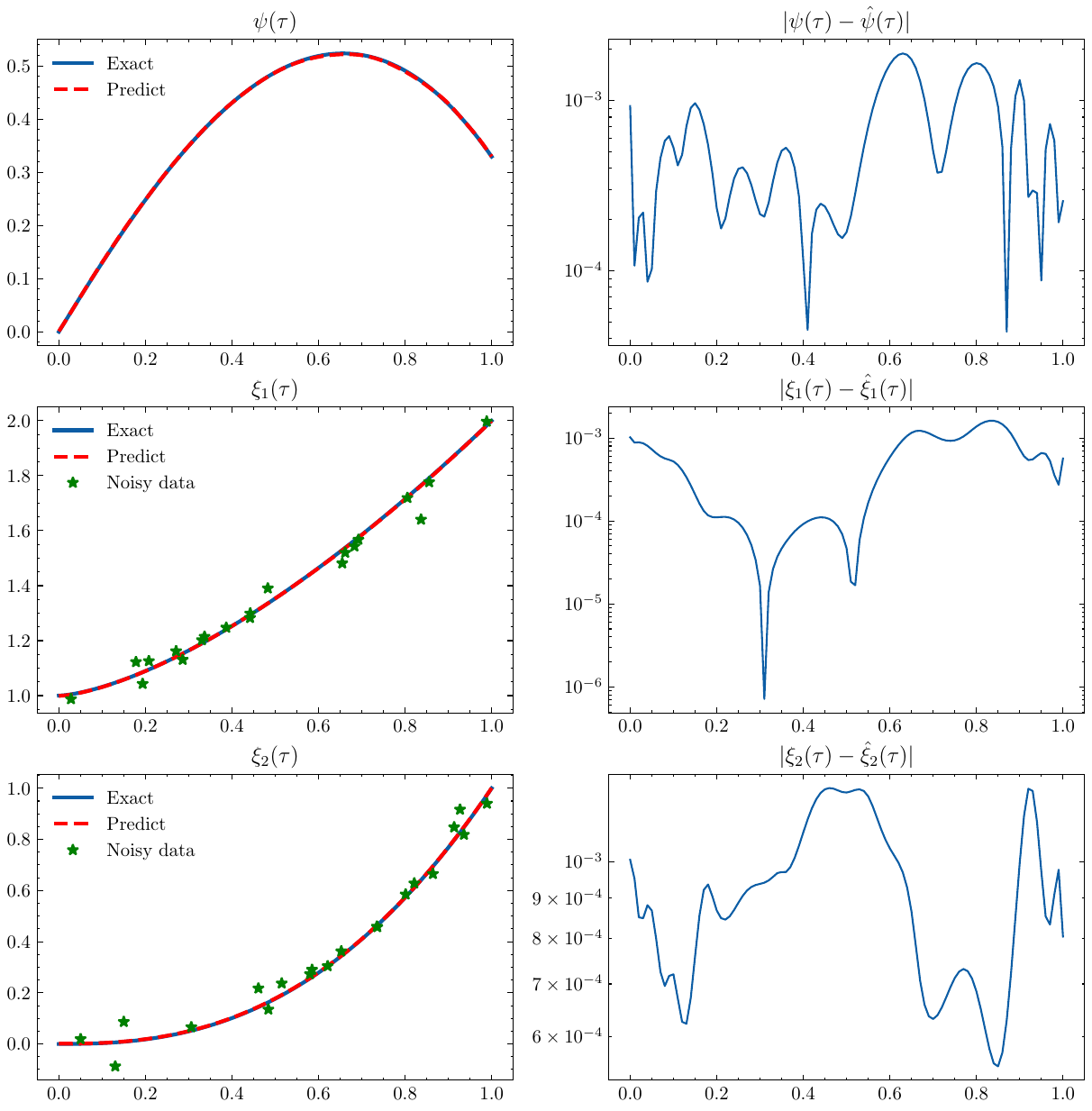}
        \caption{Parameter identification of optimal control problems using the KANtrol framework, for Example \ref{ex:frac-inverse}.}
        \label{fig:inverse}
    \end{figure}

    \begin{figure}[ht]
        \centering
        \includegraphics[width=0.8\linewidth]{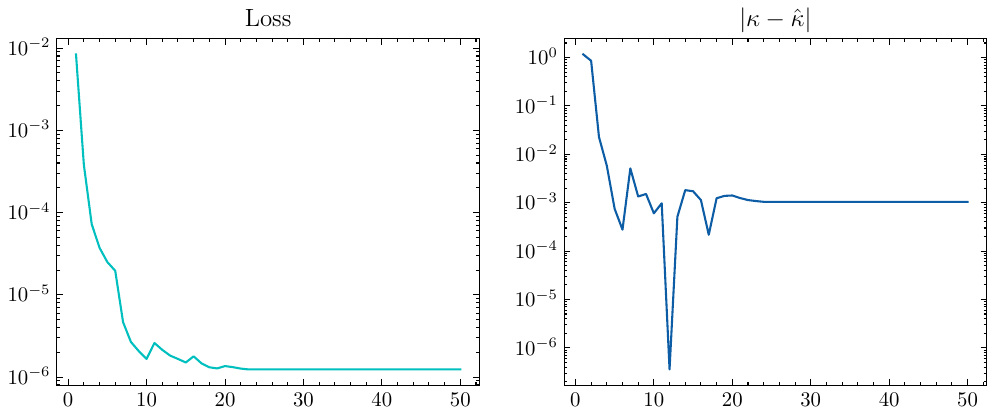}
        \caption{The training loss and convergence of the KANtrol framework towards the exact $\kappa$ value during the training phase are depicted. The y-axis of the plots is on a logarithmic scale.}
        \label{fig:kappa}
    \end{figure}
\end{example}

\begin{example}
    \label{ex:opt-ide}
For the next experiment, we consider \eqref{eq:optcont} as 
$$\mathcal{L}(\xi(\tau), \psi(\tau), \tau)  = \left( \xi(\tau) - \exp({\tau^2}) \right)^2 + \left( \psi(\tau) - 2\tau - 1 \right)^2,$$
over $\Xi=[\tau_0, \tau_f] = [0,1]$ with an integro-differential equation constraint \cite{heydari2023orthonormal,aghaei2024pinnies}:
\begin{equation*}
    \mathfrak{D}[\xi(\tau)] = \psi(\tau) - \xi(\tau) + \int_0^\tau (2\tau^2 + \tau) \exp({{\iota\tau-\iota^2}}) \xi(\iota) d\iota,
\end{equation*}
given initial condition $\xi(0)=1$. The exact solution to this problem is \(\xi(\tau) = \exp(\tau^2)\) and \(\psi(\tau) = 2\tau + 1\), resulting in an optimal value of \(\mathfrak{J} = 0\). For the simulation, we trained KAN, fKAN, rKAN, and an MLP network with 100 training points, using \( \upsilon_{\mathfrak{J}} = 10^{-3} \) and setting the other hyperparameters to one. The results of this neural network simulation are presented in Table \ref{tbl:opt-integro}. As observed, the superiority of the KAN-based architecture over the plain MLPs is evident.

\begin{table}[ht]
\centering
\begin{tabular}{@{}ccccc@{}}
\toprule
\backslashbox{Criteria}{Model} & \begin{tabular}[c]{@{}c@{}}KAN\\ $k = 3$\end{tabular} & \begin{tabular}[c]{@{}c@{}}Fractional KAN\\ $k=2$\end{tabular} & \begin{tabular}[c]{@{}c@{}}Rational KAN\\ Pade [3/6]\end{tabular} & \begin{tabular}[c]{@{}c@{}}PINNIES\\ MLP\end{tabular} \\ \midrule
$\mathfrak{J}$   & {$\mathbf{2.20 \times 10^{-6}}$}  & $5.49 \times 10^{-5}$ & $5.56 \times 10^{-6}$ & $1.44 \times 10^{-3}$ \\
${\psi}(\tau)$   & {$\mathbf{1.05 \times 10^{-3}}$}  & $6.34 \times 10^{-3}$ & $1.96 \times 10^{-3}$ & $3.33 \times 10^{-2}$ \\
${\xi}(\tau)$  & {$\mathbf{1.29 \times 10^{-4}}$}  & $8.64 \times 10^{-4}$ & $1.83 \times 10^{-3}$ & $5.40 \times 10^{-3}$ \\ \bottomrule
\end{tabular}
\caption{Comparison of the mean absolute error across various neural network architectures for solving the integro-differential optimal control problem \ref{ex:opt-ide}.}
\label{tbl:opt-integro}
\end{table}

\end{example}

\begin{example}
    \label{ex:opt-2d}
For a more complex optimal control problem, consider the following two-dimensional problem \cite{hassani2020generalized}:
$$
\mathcal{L}(\xi(\zeta,\tau), \psi(\zeta,\tau), \zeta,\tau)  = \left[ \xi(\zeta,\tau) - \tau^4 \sin(\zeta) \right]^2 + \left[ \psi(\zeta,\tau) - \tau^3 \cos(\zeta) \right]^2,
$$
over $\Xi = [0,1]^2$ with respect to PDE:
\begin{equation}
\begin{aligned}
    \frac{\partial \xi(\zeta, \tau)}{\partial \tau} &= \cos(\xi(\zeta, \tau)) 
    + 2 \sin(\zeta) \frac{\partial \xi(\zeta, \tau)}{\partial \zeta} 
    + \frac{\partial^2 \xi(\zeta, \tau)}{\partial \zeta^2} \\
    &\quad + 6 \sin(\zeta) u(\zeta, \tau) 
    - \cos(\tau^4 \sin(\zeta)) \\
    &\quad - \tau^3 \left(\tau \sin(2\zeta) - \tau \sin(\zeta) + 3 \sin(2\zeta)\right) \\
    &\quad + 4 \sin(\zeta) \tau^3,
\end{aligned}
\end{equation}
with initial and boundary conditions $\xi(\zeta,0) = \xi(0,\tau)=0$. The exact solution to this problem is \( \xi(\zeta,\tau) = \tau^4 \sin(\zeta) \) and \( \psi(\zeta,\tau) = \tau^3 \cos(\zeta) \), which yields an analytical objective value of \( \mathcal{J} = 0 \). We employed a nested Gauss-Legendre method to approximate the cost functional and used automatic differentiation to compute the partial derivatives of the unknown solution with respect to \( \tau \). Given that the number of training points increases quadratically with the grid approach, we set \(M = 25 \) training points for each dimension in this example. The results of the neural network simulations for this problem using $\upsilon_\mathfrak{J}=10^{-6}$ are presented in Table \ref{tbl:opt-2d}.

\begin{table}[ht]
\centering
\begin{tabular}{@{}ccccc@{}}
\toprule
\backslashbox{Criteria}{Model} & \begin{tabular}[c]{@{}c@{}}KAN\\ $k = 3$\end{tabular} & \begin{tabular}[c]{@{}c@{}}Fractional KAN\\ $k = 2$\end{tabular} & \begin{tabular}[c]{@{}c@{}}Rational KAN\\ Pade [3/6]\end{tabular} & \begin{tabular}[c]{@{}c@{}}PINNIES\\ MLP\end{tabular} \\ \midrule
$\mathfrak{J}$   & {$\mathbf{2.41 \times 10^{-7}}$}  & $1.94 \times 10^{-5}$ & $5.56 \times 10^{-6}$ & $6.49 \times 10^{-6}$ \\
${\psi}(\tau)$   & {$\mathbf{1.27 \times 10^{-4}}$}  & $3.46 \times 10^{-3}$ & $1.96 \times 10^{-3}$ & $2.57 \times 10^{-3}$ \\
${\xi}(\tau)$  & {$\mathbf{4.14 \times 10^{-4}}$}  & $3.76 \times 10^{-3}$ & $1.83 \times 10^{-3}$ & $1.10 \times 10^{-3}$ \\ \bottomrule
\end{tabular}
\caption{Comparison of the MAE for various neural network architectures in solving the multi-dimensional optimal control problem \ref{ex:opt-2d}.}
\label{tbl:opt-2d}
\end{table}

\end{example}

\begin{example}
    \label{ex:opt-3d}
    For the last experiment, we consider optimal control of two-dimensional heat PDE given by
\begin{align*}
        \mathcal{L}(\xi(\boldsymbol\zeta,\tau), \psi(\boldsymbol\zeta,\tau), \boldsymbol\zeta,\tau)  &= \left[ \xi(\zeta_1,\zeta_2,\tau) - \sin(\zeta_1) \sin(\zeta_2) \exp(-\tau) \right]^2 \\
        &+ \left[ \psi(\zeta_1,\zeta_1,\tau) - \sqrt{\exp(\zeta_1+\zeta_2-\tau)} \right]^2,
    \end{align*}
over $\Xi=[0,\pi]^3$ with respect to $\frac{\partial \xi}{\partial \tau} = \Delta \xi + \psi + \chi$ where $\Delta$ denotes Laplacian operator, $\chi(\boldsymbol{\zeta},\tau)$ is a known source term, with initial condition $\xi(\zeta_1,\zeta_2,0) = \sin(\zeta_1)\sin(\zeta_2)$ and four boundary conditions on domain $[0,\pi]$. The exact solution to this problem is given by \(\xi(\zeta_1,\zeta_2,\tau) = \sin(\zeta_1) \sin(\zeta_2) \exp(-\tau)\) and \(\psi(\zeta_1,\zeta_2,\tau) = \sqrt{\exp(\zeta_1,\zeta_2,\tau)}\) \cite{kouatchou2001finite}, with the optimal objective value being \(\mathfrak{J} = 0\). We simulated this problem using 10 training points in each dimension with KAN, rKAN, fKAN, and MLPs, setting \(\upsilon_\mathfrak{J} = 10^0\). The comparison results are shown in Table \ref{tbl:opt-3d}, and Figure \ref{fig:3d} illustrates the predicted solution using KAN as the base model.
\begin{table}[ht]
\begin{tabular}{@{}ccccc@{}}
\toprule
\backslashbox{Criteria}{Model} & \begin{tabular}[c]{@{}c@{}}KAN\\ $k = 3$\end{tabular} & \begin{tabular}[c]{@{}c@{}}Fractional KAN\\ $k = 2$\end{tabular} & \begin{tabular}[c]{@{}c@{}}Rational KAN\\ Pade [3/6]\end{tabular} & \begin{tabular}[c]{@{}c@{}}PINNIES\\ MLP\end{tabular} \\ \midrule
$\mathfrak{J}$   & {$\mathbf{1.12 \times 10^{-2}}$}  & $2.08 \times 10^{-2}$ & $5.94 \times 10^{-2}$ & $1.61 \times 10^{-2}$ \\
${\psi}(\tau)$   & {$\mathbf{2.99 \times 10^{-4}}$}  & $4.85 \times 10^{-3}$ & $1.51 \times 10^{-3}$ & $3.61 \times 10^{-4}$ \\
${\xi}(\tau)$    & {$\mathbf{2.19 \times 10^{-3}}$}  & $2.80 \times 10^{-3}$ & $2.89 \times 10^{-3}$ & $2.66 \times 10^{-3}$ \\ \bottomrule
\end{tabular}
\caption{Comparison of the mean absolute error across various neural network architectures for solving the multi-dimensional optimal control problem of heat equation \ref{ex:opt-3d}.}
\label{tbl:opt-3d}
\end{table}

\begin{figure}[ht]
    \centering
    \includegraphics[width=0.95\linewidth]{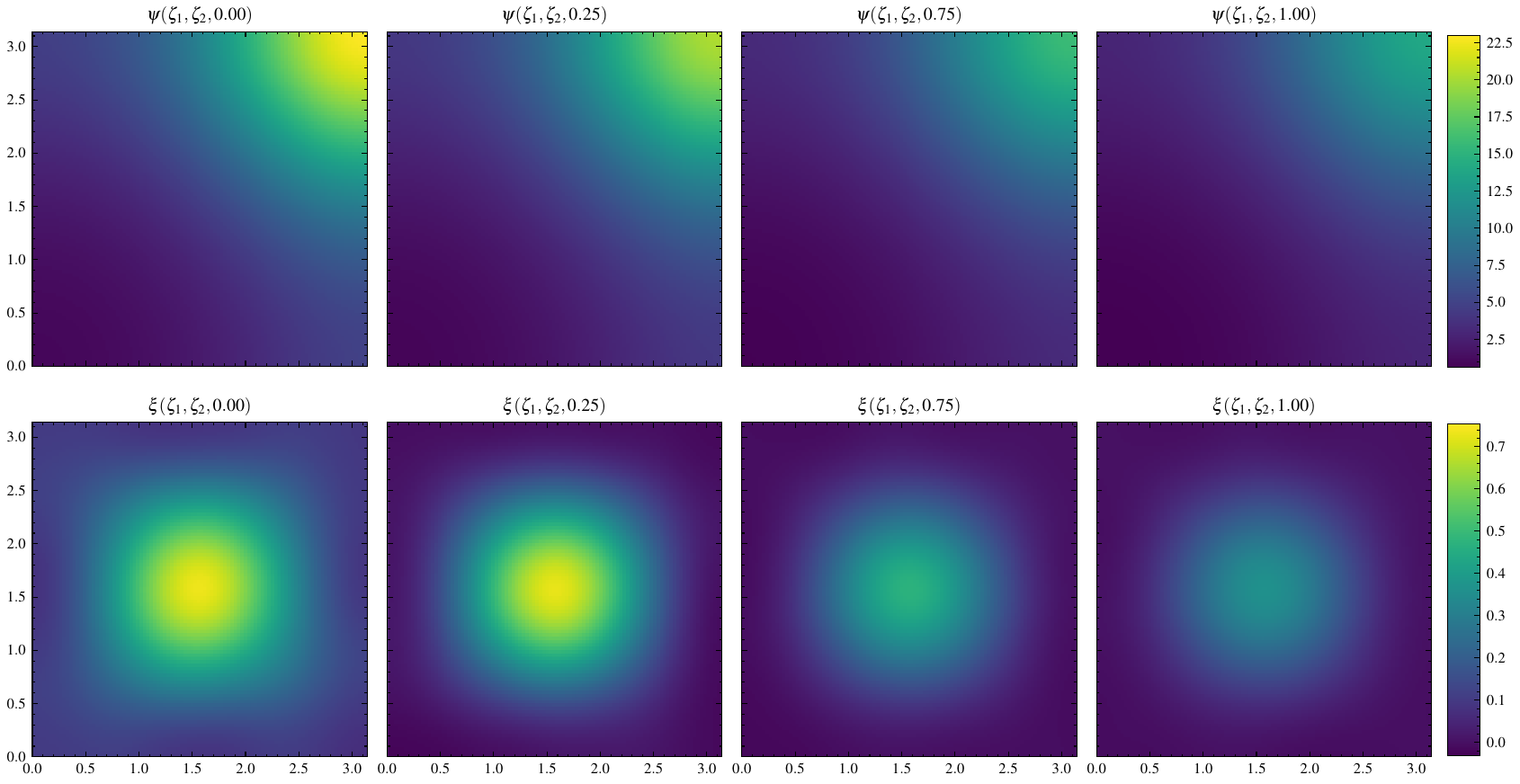}
    \caption{Simulation results for the optimal control of a 2D heat PDE over the domain \([0, \pi]^2\) at different time values up to one. The top row displays the control variable, while the bottom row illustrates the evolution of heat as the state variable.}
    \label{fig:3d}
\end{figure}
\end{example}

\section{Conclusion}
In this paper, we introduced the KANtrol framework, a physics-informed neural network approach for solving optimal control problems using Kolmogorov-Arnold networks. We demonstrated how the integration component can be approximated using Gaussian quadrature, applied to both the functional of the optimal control and the state equations expressed as integro-differential equations. Additionally, we illustrated how operational matrices for fractional derivatives can be employed to accurately approximate fractional Caputo derivatives within the KANtrol architecture, using only matrix-vector product operations.

Through a series of experiments on well-established optimal control problems, including forward and parameter identification problems, as well as the optimal control of partial differential equations, we evaluated the performance of the KANtrol framework. We also simulated an optimal control problem involving a two-dimensional heat equation. Our results were compared with various state-of-the-art approaches, such as rational KAN \cite{aghaei2024rkan}, fractional KAN \cite{aghaei2024fkan}, and the PINNIES framework \cite{aghaei2024pinnies}. In all cases, the KAN architecture provided superior numerical approximations; however, it proved slower compared to multi-layer networks like those used in the PINNIES framework. A noted limitation is the necessity to optimize the $\upsilon_\mathfrak{J}$ hyperparameter, which balances the tradeoff between the accuracy of the running cost function and system dynamics. Future work could focus on accelerating KANs or employing Hamiltonian techniques to eliminate the need for this hyperparameter.

\bibliographystyle{elsarticle-num} 
\bibliography{refs}

\end{document}